\DeclareSymbolFont{rsfscript}{OMS}{rsfs}{m}{n}
\DeclareSymbolFontAlphabet{\mathrsfs}{rsfscript}
\DeclareMathOperator{\Styl}{Styl}
\DeclareMathOperator{\Kis}{Kis}
\DeclareMathOperator{\Cat}{Cat}
\renewcommand*\subjclass[2][2010]{\def\@subjclass{#2}\@ifundefined{subjclassname@#1}{\ClassWarning{\@classname}{Unknown edition (#1) of Mathematics Subject Classification; using '2010'.}}{\@xp\let\@xp\subjclassname\csname subjclassname@#1\endcsname}}
\renewcommand{\subjclassname}{\textup{2010} Mathematics Subject Classification}
\newtheorem{theorem}{Theorem}
\newtheorem{lemma}[theorem]{Lemma}
\newtheorem{corollary}[theorem]{Corollary}
\begin{document}

\title{Identities of the stylic monoid}
\thanks{Supported by the Ministry of Science and Higher Education of the Russian Federation (Ural Mathematical Center project No. 075-02-2022-877)}

\author[M. V. Volkov]{Mikhail V. Volkov}
\address{Institute of Natural Sciences and Mathematics\\
Ural Federal University\\ 620000 Ekaterinburg, Russia}
\email{m.v.volkov@urfu.ru}

\date{}

\begin{abstract}
We observe that for each $n\ge 2$, the identities of the stylic monoid with $n$ generators coincide with the identities of $n$-generated monoids from other distinguished series of $\mathrsfs{J}$-trivial monoids studied in the literature, e.g., Catalan monoids and Kiselman monoids. This solves the Finite Basis Problem for stylic monoids.
\end{abstract}

\keywords{$\mathrsfs{J}$-trivial monoid; stylic monoid; Kiselman monoid; Catalan monoid; Finite Basis Problem}

\subjclass{20M05 20M07}

\maketitle

A \emph{monoid identity} is a pair of \emph{words}, i.e., elements of the free monoid $X^*$ over an alphabet $X$, written as a formal equality. An identity $w=w'$ with $w,w'\in X^*$ is said to \emph{hold in a monoid $M$} if $w\varphi=w'\varphi$ for each homomorphism $\varphi\colon X^*\to M$; alternatively, we say that the monoid \emph{satisfies} $w=w'$. Clearly, if $M$ satisfies $w=w'$, then so does every homomorphic image of $M$.

Given any set $\Sigma$ of monoid identities, we say that an identity $w=w'$ \emph{follows} from $\Sigma$ if every monoid satisfying all identities of $\Sigma$ satisfies the identity $w=w'$ as well. A subset $\Delta\subseteq\Sigma$ is called a \emph{basis} for $\Sigma$ if each identity in $\Sigma$ follows from $\Delta$. The \emph{Finite Basis Problem} for a monoid $M$ is the question of whether or not the set of all identities that hold in $M$ admits a finite basis.

A monoid $M$ is said to be $\mathrsfs{J}$-\emph{triv\-i\-al} if every principal ideal of $M$ has a unique generator, that is, $MaM=MbM$ implies $a=b$ for all $a,b\in M$. Finite $\mathrsfs{J}$-trivial monoids attract much attention because of their distinguished role in algebraic theory of regular languages~\cite{Si72,Si75} and representation theory~\cite{DHST11}. Several series of finite $\mathrsfs{J}$-trivial monoids parameterized by positive integers appear in the literature, including Straubing monoids~\cite{St80,Vo04}, Catalan monoids~\cite{So96}, double Catalan monoids~\cite{MS12}, Kiselman monoids~\cite{Ki02,KM09,GM11}, and gossip monoids~\cite{BDF15,FJK18,JF19}. These monoids arise due to completely unrelated reasons and consist of elements of a very different nature. Surprisingly, studying identities of the listed monoids has revealed that the $n$-th monoids in each series satisfy exactly the same identities!

Recently, a new family of finite $\mathrsfs{J}$-trivial monoids, coined \emph{stylic} monoids, have been introduced by Abram and Reutenauer~\cite{AR22}, with motivation coming from combinatorics of Young tableaux. It is quite natural to ask whether the above phenomenon extends to this new family, i.e., whether the $n$-th stylic monoid again satisfies the same identities as do the $n$-th monoids in each aforementioned series. The present note aims to answers this question in the affirmative.

The combinatorial definition of stylic monoids can be found in \cite{AR22}, but here we only need their presentation via generators and relations established in \cite[Theorem 8.1(ii)]{AR22}. Thus, let the stylic monoid $\Styl_n$ be the monoid generated by $a_1,a_2,\dots,a_n$ subject to the relations
\begin{align}
&a_i^2=a_i                      &&\text{for each } i=1,\dots,n;\label{eq:idempotent}\\
&a_ja_ia_k=a_ja_ka_i            &&\text{if } 1\le i<j<k\le n;\label{eq:plactic1}\\
&a_ia_ka_j=a_ka_ia_j            &&\text{if } 1\le i<j<k\le n;\label{eq:plactic2}\\
&a_ja_ia_i=a_ia_ja_i            &&\text{if } 1\le i<j\le n;\label{eq:plactic3}\\
&a_ja_ja_i=a_ja_ia_j            &&\text{if } 1\le i<j\le n.\label{eq:plactic4}
\end{align}
We want to relate $\Styl_n$ to two other monoids with same generating set. The Kiselman monoid $\Kis_n$, defined by Kiselman~\cite{Ki02} for $n=3$ and by Ganyushkin and Mazorchuk (unpublished) for an arbitrary $n\ge 2$, is generated by  $a_1,a_2,\dots,a_n$ subject to the relations
\begin{align}
&a_i^2=a_i                       &&\text{for each } i=1,\dots,n;\label{eq:idempotent1}\\
& a_ia_ja_i=a_ja_ia_j=a_ja_i     &&\text{if } 1\le i<j\le n.\label{eq:kiselman}
\end{align}
Catalan monoids were defined by Solomon~\cite{So96} as monoids of certain transformations on directed paths, but again, we only need their presentation via generators and relations from \cite[Section 9]{So96}. So, for the purpose of this note, let $\Cat_n$ stand for the monoid generated by  $a_1,a_2,\dots,a_n$ subject to the relations
\begin{align}
&a_i^2=a_i                  &&\text{for each } i=1,\dots,n;\label{eq:idempotent2}\\
&a_{i}a_{k}=a_{k}a_{i}      &&\text{if }|i-k|\ge 2,\ i,k=1,\dots,n;\label{eq:catalan1}\\
&a_ia_{i+1}a_i=a_{i+1}a_ia_{i+1}=a_{i+1}a_i &&\text{for each } i=1,\dots,n-1.\label{eq:catalan2}
\end{align}

\begin{lemma}
\label{lem:Kis-mapsonto-Styl}
The relations \eqref{eq:kiselman} hold in the monoid $\Styl_n$.
\end{lemma}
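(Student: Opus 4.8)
The plan is to derive each of the Kiselman relations \eqref{eq:kiselman} directly inside $\Styl_n$ from its defining relations, treating the cases one generator-pair at a time. The idempotent part of \eqref{eq:kiselman} is nothing to prove: the relations \eqref{eq:idempotent1} coincide verbatim with \eqref{eq:idempotent}, which hold in $\Styl_n$ by definition. So the whole content of the lemma is the three-fold equality $a_ia_ja_i=a_ja_ia_j=a_ja_i$ for a fixed pair $i<j$, and I would obtain it by combining the two-generator ``plactic'' relations \eqref{eq:plactic3} and \eqref{eq:plactic4} with the idempotent law.

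First I would invoke relation \eqref{eq:plactic3}, namely $a_ja_ia_i=a_ia_ja_i$, and collapse its left-hand side using $a_ia_i=a_i$ from \eqref{eq:idempotent}. This gives $a_ja_i=a_ia_ja_i$, i.e.\ the first half of the claim, $a_ia_ja_i=a_ja_i$. Next I would invoke relation \eqref{eq:plactic4}, namely $a_ja_ja_i=a_ja_ia_j$, and collapse its left-hand side using $a_ja_j=a_j$; this yields $a_ja_i=a_ja_ia_j$, i.e.\ $a_ja_ia_j=a_ja_i$. Chaining the two resulting equalities produces $a_ia_ja_i=a_ja_i=a_ja_ia_j$, which is exactly \eqref{eq:kiselman}.

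Since the entire argument consists of substituting one-step consequences of the defining relations, there is essentially no obstacle to overcome; the derivation is a two-line calculation. The only bookkeeping point worth verifying is that \eqref{eq:plactic3} and \eqref{eq:plactic4} are available for every pair with $1\le i<j\le n$ (they are stated exactly for this range), so that the argument indeed covers all generator pairs required by \eqref{eq:kiselman}.
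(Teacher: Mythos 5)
Your proposal is correct and is essentially identical to the paper's own proof: both derive $a_ia_ja_i=a_ja_i$ from \eqref{eq:plactic3} plus \eqref{eq:idempotent} and $a_ja_ia_j=a_ja_i$ from \eqref{eq:plactic4} plus \eqref{eq:idempotent}, then chain the two equalities. No gaps; nothing further to add.
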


\begin{proof}
For any $i,j\in\{1,2,\dots,n\}$ with $i<j$, we can deduce in $\Styl_n$ the following:
\[
a_ia_ja_i\stackrel{\eqref{eq:plactic3}}{=}a_ja_ia_i\stackrel{\eqref{eq:idempotent}}{=}a_ja_i\quad\text{and}\quad a_ja_ia_j\stackrel{\eqref{eq:plactic4}}{=}a_ja_ja_i\stackrel{\eqref{eq:idempotent}}{=}a_ja_i.
\]
Hence the relation $a_ia_ja_i=a_ja_ia_j=a_ja_i$ holds in $\Styl_n$.
\end{proof}

\begin{lemma}
\label{lem:Styl-mapsonto-Cat}
The relations \eqref{eq:plactic1}--\eqref{eq:plactic4} hold in the monoid $\Cat_n$.
\end{lemma}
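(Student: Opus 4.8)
The plan is to first upgrade the adjacent ``collapse'' relation \eqref{eq:catalan2} to an analogous relation for arbitrary, not necessarily consecutive, pairs of generators, and then to read off \eqref{eq:plactic1}--\eqref{eq:plactic4} from this together with the commutation relation \eqref{eq:catalan1}. Concretely, I would first verify that for all $1\le i<j\le n$ the relation $a_ia_ja_i=a_ja_ia_j=a_ja_i$ holds in $\Cat_n$; call this $(\star)$. When $j=i+1$, this is exactly \eqref{eq:catalan2}. When $j\ge i+2$ we have $|i-j|\ge 2$, so $a_i$ and $a_j$ commute by \eqref{eq:catalan1}, and then $a_ia_ja_i=a_ja_ia_i=a_ja_i$ and $a_ja_ia_j=a_ia_ja_j=a_ia_j=a_ja_i$ using idempotency \eqref{eq:idempotent2}. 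Hence $(\star)$ holds for every pair $i<j$.

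The observation that makes \eqref{eq:plactic1} and \eqref{eq:plactic2} immediate is that the hypothesis $1\le i<j<k\le n$ forces $k\ge i+2$, since $j$ lies strictly between $i$ and $k$. Thus $|i-k|\ge 2$, so $a_i$ and $a_k$ commute by \eqref{eq:catalan1}, and we get $a_ja_ia_k=a_ja_ka_i$ as well as $a_ia_ka_j=a_ka_ia_j$ at once. For \eqref{eq:plactic3} and \eqref{eq:plactic4} I would show that in each case both sides reduce to $a_ja_i$: idempotency gives $a_ja_ia_i=a_ja_i$ and $a_ja_ja_i=a_ja_i$, while $(\star)$ gives $a_ia_ja_i=a_ja_i$ and $a_ja_ia_j=a_ja_i$, so \eqref{eq:plactic3} and \eqref{eq:plactic4} follow.

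There is no serious obstacle in this argument; the only point worth flagging is the reduction carried out in the first step. The relation \eqref{eq:catalan2} as stated in the presentation of $\Cat_n$ handles only consecutive indices, so before the plactic relations can be extracted one must confirm that the non-consecutive case of $(\star)$ is supplied by commutation and idempotency. Once $(\star)$ is available for all pairs, together with the elementary index inequality $k\ge i+2$ for \eqref{eq:plactic1} and \eqref{eq:plactic2}, the verification of all four relations is a short routine calculation.
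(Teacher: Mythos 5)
Your proof is correct and follows essentially the same approach as the paper: both arguments get \eqref{eq:plactic1} and \eqref{eq:plactic2} from the observation that $i<j<k$ forces $|i-k|\ge 2$, and both split \eqref{eq:plactic3} and \eqref{eq:plactic4} into the case $|i-j|\ge 2$ (handled by \eqref{eq:catalan1} and \eqref{eq:idempotent2}) and the case $j=i+1$ (handled by \eqref{eq:catalan2} and \eqref{eq:idempotent2}). Your intermediate step $(\star)$ merely repackages the same computation as the statement that the Kiselman-type relation holds for all pairs in $\Cat_n$, which is a tidy but not substantively different organization.
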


\begin{proof}
If $i,j,k\in\{1,2,\dots,n\}$ are such that $i<j<k$, then $k-i\ge2$ whence $a_ja_ia_k\stackrel{\eqref{eq:catalan1}}{=}a_ja_ka_i$ and $a_ia_ka_j\stackrel{\eqref{eq:catalan1}}{=}a_ka_ia_j$ in $\Cat_n$. Thus, \eqref{eq:plactic1} and \eqref{eq:plactic2} hold in $\Cat_n$.

If $1\le i<j\le n$ and $j-i\ge2$, we can apply \eqref{eq:catalan1} in a similar way: $a_ja_ia_i\stackrel{\eqref{eq:catalan1}}{=}a_ia_ja_i$ and $a_ja_ja_i\stackrel{\eqref{eq:catalan1}}{=}a_ja_ia_j$. Therefore, to prove that \eqref{eq:plactic3} and \eqref{eq:plactic4}  hold in $\Cat_n$, it remains to consider the case $j=i+1$. In this case, we can deduce in $\Cat_n$ the following:
\[
a_{i+1}a_ia_i\stackrel{\eqref{eq:idempotent2}}{=}a_{i+1}a_i\stackrel{\eqref{eq:catalan2}}{=}a_{i+1}a_ia_{i+1}\quad\text{and}\quad
a_{i+1}a_{i+i}a_i\stackrel{\eqref{eq:idempotent2}}{=}a_{i+1}a_i\stackrel{\eqref{eq:catalan2}}{=}a_ia_{i+1}a_i.
\]
Hence  \eqref{eq:plactic3} and \eqref{eq:plactic4} also hold in $\Cat_n$ for all $i,j$ with $1\le i<j\le n$.
\end{proof}

\begin{theorem}
\label{thm:coincide}
For each $n\ge 2$, the monoids $\Cat_n$, $\Styl_n$, and  $\Kis_n$ satisfy the same identities.
\end{theorem}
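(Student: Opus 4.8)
The plan is to turn the two preparatory lemmas into a chain of surjective homomorphisms and then to collapse that chain using the already-known coincidence of identities at its two ends. First I would read Lemma~\ref{lem:Kis-mapsonto-Styl} as the statement that the defining relations of $\Kis_n$---namely \eqref{eq:kiselman} together with the idempotent relations shared by all three presentations---are satisfied by the generators $a_1,\dots,a_n$ of $\Styl_n$. By the universal property of a monoid presentation, the assignment $a_i\mapsto a_i$ then extends to a homomorphism $\Kis_n\to\Styl_n$, which is onto because its image contains every generator of $\Styl_n$. In exactly the same way, Lemma~\ref{lem:Styl-mapsonto-Cat} shows that the generators of $\Cat_n$ satisfy the defining relations \eqref{eq:plactic1}--\eqref{eq:plactic4} of $\Styl_n$, so that $a_i\mapsto a_i$ yields a surjection $\Styl_n\to\Cat_n$. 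This produces the chain $\Kis_n\twoheadrightarrow\Styl_n\twoheadrightarrow\Cat_n$.

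Next, writing $\mathrm{Id}(M)$ for the set of all identities satisfied by a monoid $M$, I would invoke the elementary fact recorded in the introduction that a homomorphic image of $M$ satisfies every identity of $M$; equivalently, a surjection $M\twoheadrightarrow N$ gives $\mathrm{Id}(M)\subseteq\mathrm{Id}(N)$. Applied to the two surjections above, this yields
\[
\mathrm{Id}(\Kis_n)\subseteq\mathrm{Id}(\Styl_n)\subseteq\mathrm{Id}(\Cat_n),
\]
so that, as far as identities are concerned, the stylic monoid is sandwiched between the Kiselman and the Catalan monoids.

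Finally, to turn these inclusions into equalities I would appeal to the known result---part of the very phenomenon advertised in the introduction---that for each $n\ge2$ the monoids $\Kis_n$ and $\Cat_n$ satisfy exactly the same identities, i.e.\ $\mathrm{Id}(\Kis_n)=\mathrm{Id}(\Cat_n)$. Combined with the displayed chain, this forces $\mathrm{Id}(\Kis_n)=\mathrm{Id}(\Styl_n)=\mathrm{Id}(\Cat_n)$, which is precisely the assertion of the theorem. The argument is therefore a purely formal squeeze, and I expect no genuine obstacle within it: all the real content has already been discharged, partly in the two lemmas above and partly in the prior literature establishing the Kiselman--Catalan identity coincidence. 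The one point demanding care is thus the accurate invocation and citation of that prior equality; with it in hand, nothing further about the identities of $\Styl_n$ needs to be proved directly.
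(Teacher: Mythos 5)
Your proposal is correct and follows essentially the same route as the paper: both use the two lemmas together with the universal property of presentations (the paper cites this as Dyck's Theorem) to obtain surjections $\Kis_n\twoheadrightarrow\Styl_n\twoheadrightarrow\Cat_n$, and then squeeze the identities of $\Styl_n$ between those of $\Kis_n$ and $\Cat_n$, which coincide by \cite[Theorem~8]{AVZ15}. No gaps.
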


\begin{proof}
We invoke Dyck's Theorem (see, e.g., \cite[Theorem III.8.3]{Cohn}). Specialized in the case of monoids, it says that if a monoid $M$ is generated by a set $A$ subject to relations $R$ and $N$ is a monoid generated by $A$ and such that all the relations $R$ hold in $N$, then $N$ is a homomorphic image of $M$. In view of this fact, Lemmas~\ref{lem:Styl-mapsonto-Cat} and~\ref{lem:Kis-mapsonto-Styl} imply that $\Cat_n$ is a homomorphic image of $\Styl_n$, which in turn is a homomorphic image of $\Kis_n$. Since identities are inherited by homomorphic images, every identity holding in $\Kis_n$ holds in $\Styl_n$, and every identity holding in $\Styl_n$ holds in $\Cat_n$. However, it follows from \cite[Theorem~8]{AVZ15} that for each $n\ge 2$, the monoids $\Cat_n$ and  $\Kis_n$ satisfy the same identities. Hence the same identities hold in the `intermediate' monoid $\Styl_n$ as well.
\end{proof}

Since the identities of the monoids $\Cat_n$ and  $\Kis_n$ have been characterized in \cite{Vo04,AVZ15}, Theorem~\ref{thm:coincide} leads to an efficient combinatorial description of the identities of stylic monoids. The description involves the notion of a scattered subword. Recall that a product $x_1\cdots x_k$ of elements from an alphabet $X$ is said to be a \emph{scattered subword of length $k$} in a word $v\in X^*$ if there exist words $v_0,v_1,\dots,v_{k-1},v_k\in X^*$ such that $v=v_0x_1v_1\cdots v_{k-1}x_kv_k$; in other terms, $x_1\cdots x_k$ is as a subsequence of $v$. The following is a combination of Theorem~\ref{thm:coincide} with either~\cite[Proposition~4 and Corollary~2]{Vo04} or \cite[Theorem~8]{AVZ15}.

\begin{corollary}
\label{cor:scattered}
An identity $w=w'$ holds in the monoid $\Styl_n$ if and only if the words $w$ and $w'$ have the same set of scattered subwords of length at most $n$.
\end{corollary}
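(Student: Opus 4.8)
The plan is to combine the equivalence of identities established in Theorem~\ref{thm:coincide} with a known combinatorial characterization of the identities of one of the three monoids, reducing the corollary to a citation-plus-translation. Since Theorem~\ref{thm:coincide} tells us that $\Styl_n$, $\Cat_n$, and $\Kis_n$ all satisfy exactly the same identities, it suffices to prove the scattered-subword criterion for any one of them; the cited results \cite{Vo04} and \cite{AVZ15} already do this for $\Cat_n$ and $\Kis_n$ respectively. So first I would invoke Theorem~\ref{thm:coincide} to replace $\Styl_n$ by, say, $\Kis_n$ in the statement, reducing everything to: an identity $w=w'$ holds in $\Kis_n$ if and only if $w$ and $w'$ have the same scattered subwords of length at most $n$.

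Next I would quote the relevant external result. The delicate point is matching the precise formulation in \cite{AVZ15} (or \cite{Vo04}) to the present phrasing. The cited characterizations are typically stated in terms of an invariant attached to each word --- for instance, the set of all scattered subwords of bounded length, or equivalently the so-called content together with short-subword data --- so I would state that invariant as it appears in the source and observe that it is literally the set of scattered subwords of length at most $n$. One must be slightly careful about the indexing convention: the parameter $n$ counts generators, and the cited bound on subword length should be verified to be exactly $n$ and not $n-1$ or $n+1$; this is the kind of off-by-one that the author should check against \cite[Theorem~8]{AVZ15} directly.

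The main obstacle, then, is not any substantial argument but rather the faithful transcription of the external characterization and the verification that its hypotheses apply. In particular, I would confirm that the results of \cite{Vo04,AVZ15} are stated for all $n\ge 2$ (matching the hypothesis of Theorem~\ref{thm:coincide}) and that the notion of ``scattered subword'' used there coincides with the subsequence notion recalled in the paragraph preceding the corollary. Once these bookkeeping points are settled, the proof is immediate: the identity $w=w'$ holds in $\Styl_n$ if and only if it holds in $\Kis_n$ (by Theorem~\ref{thm:coincide}), which by \cite[Theorem~8]{AVZ15} happens if and only if $w$ and $w'$ share the same scattered subwords of length at most $n$. I expect the entire proof to consist of these two sentences, so the real work is ensuring the cited statements say exactly what is needed.
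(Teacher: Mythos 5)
Your proposal is correct and matches the paper's argument exactly: the corollary is obtained by combining Theorem~\ref{thm:coincide} with the scattered-subword characterization of the identities of $\Cat_n$ from \cite[Proposition~4 and Corollary~2]{Vo04} or of $\Kis_n$ from \cite[Theorem~8]{AVZ15}. Your bookkeeping caveats about the length bound and the subword convention are sensible but the cited results do state the bound as exactly $n$ for $n\ge 2$, so nothing further is needed.
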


As yet another immediate application, we get a solution to the Finite Basis Problem for stylic monoids. It comes from Corollary~\ref{cor:scattered} combined with results by Blanchet-Sadri~\cite{Bl93,Bl94}.
\begin{corollary}
\label{cor:fbp for Styln}
\emph{a)} The identities $xyxzx=xyzx,\ (xy)^2=(yx)^2$ form a basis for the identities of the monoid $\Styl_2$.

\emph{b)} The identities $xyx^2zx= xyxzx,\ xyzx^2tz= xyxzx^2tz,\ zyx^2ztx= zyx^2zxtx,\ (xy)^3= (yx)^3$ form a basis for the identities of the monoid $\Styl_3$.

\emph{c)} The identities of the monoid $\Styl_n$ with $n\ge4$ admit no finite basis.
\end{corollary}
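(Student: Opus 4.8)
The plan is to derive all three parts directly from Corollary~\ref{cor:scattered} together with the equational results of Blanchet-Sadri. By Corollary~\ref{cor:scattered}, the equational theory of $\Styl_n$ consists precisely of those identities $w=w'$ for which $w$ and $w'$ carry the same set of scattered subwords of length at most $n$. This is exactly the equational theory of the monoid variety whose finite basis problem Blanchet-Sadri resolved in \cite{Bl93,Bl94}; by Theorem~\ref{thm:coincide} it also coincides with the equational theory of $\Cat_n$ and of $\Kis_n$. Consequently the Finite Basis Problem for $\Styl_n$ is literally the same problem she settled, and each of the three parts follows by transcribing her conclusions into the present notation. Since $\Styl_n$, $\Cat_n$, and $\Kis_n$ share one and the same set of identities, any statement about the existence or non\-existence of a finite basis transfers verbatim among them.

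For parts a) and b) I would proceed in two short steps. First, I would verify that each of the listed identities actually holds in $\Styl_n$ for the relevant $n$; by Corollary~\ref{cor:scattered} this reduces to checking that the two sides have the same scattered subwords of length at most $n$, a routine finite computation (for instance, $xyxzx$ and $xyzx$ have the common length-$2$ subword set $\{xy,xx,xz,yx,yz,zx\}$, and likewise for the remaining identities). Second, I would match the displayed identities against the explicit bases exhibited by Blanchet-Sadri for the cases corresponding to $n=2$ and $n=3$, confirming that the stated sets are the bases she proves to be complete (or sets provably equivalent to them). The completeness direction is supplied by her theorem and requires no additional argument here.

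For part c), the assertion that no finite basis exists for $n\ge 4$ is exactly Blanchet-Sadri's negative result for the associated varieties, which I would invoke directly. The only genuine care needed is bookkeeping rather than new mathematics: her results are phrased in the language of the dot-depth hierarchy and piecewise-testable languages, with her own indexing of the varieties, so the main obstacle I anticipate is aligning her parameter with our $n$ and confirming that the finite/infinite dichotomy indeed occurs at the transition from $n=3$ to $n=4$, as claimed in the statement.
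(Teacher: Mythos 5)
Your proposal is correct and follows essentially the same route as the paper, which likewise derives all three parts by combining Corollary~\ref{cor:scattered} with Blanchet-Sadri's finite-basis results from \cite{Bl93,Bl94}; the paper gives no more detail than you do. The extra verification steps you sketch (checking the scattered-subword sets and aligning Blanchet-Sadri's indexing with $n$) are reasonable bookkeeping but do not change the argument.
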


\end{document}